 \newcommand\reallywidehat[1]{%
 	\savestack{\tmpbox}{\stretchto{%
 			\scaleto{%
 				\scalerel*[\widthof{\ensuremath{#1}}]{\kern-.6pt\bigwedge\kern-.6pt}%
 				{\rule[-\textheight/2]{1ex}{\textheight}}
 			}{\textheight}%
 		}{0.5ex}}%
 	\stackon[1pt]{#1}{\tmpbox}%
 }
 \newcommand{\la}{\langle}
 \newcommand{\ra}{\rangle}
 \newcommand{\slnr}{\mathfrak{sl}_n(R)}
 \newcommand{\sltwo}{\mathfrak{sl}_2}
 \theoremstyle{plain}
 \newtheorem{theorem}{Theorem}[section]
 \newtheorem*{theorem*}{Theorem}
 \newtheorem*{claim*}{\textit{Claim}}
 \newtheorem{prop}[theorem]{Proposition}
 \newtheorem{lemma}[theorem]{Lemma}
 \theoremstyle{definition}
 \newtheorem{definition}[theorem]{Definition}
 \newtheorem{example}[theorem]{Example}
 \newtheorem{remark}[theorem]{Remark}
 \numberwithin{equation}{section}
\begin{document}

\title{Finite presentability of  universal central extensions of ${\mathfrak{sl}_n}$,  II }
\author{Zezhou Zhang}

\begin{abstract}
	In this note we connect finite presentability of a Jordan algebra to finite presentability of its Tits-Kantor-Koecher algebra. Through this we complete our discussion of finite presentability of universal central extensions of ${\mathfrak{sl}_n(A)}$, $A$ a $k$-algebra, initiated in \cite{ZeZel}, and answer a question raised by Shestakov-Zelmanov \cite{ShestZelFPjordan} in the positive.  
\end{abstract}
\renewcommand{\thesubsection}{\Alph{subsection}}

\maketitle


Throughout this note all algebras are considered over a field $k$ containing $\frac{1}{2}$.

\section{Introduction}

Let $\mathcal{V}$ be a variety (of universal algebras) in the sense of \cite{JacobsonJordanbook}, \cite{RingsNearlyAssociative}. An algebra $A\in \mathfrak{V}$ is said to be \textit{finitely presented (f.p.)} if it can be presented in $\mathcal{V}$ by finitely many generators and finitely many relations.

\begin{definition} 	
	A $k$-algebra $J$ satisfying the identities \begin{enumerate}
		\item $xy=yx$,
		\item $(x^2y)x=x^2(yx)$
	\end{enumerate}for all $x, y \in J$ is called a \textit{Jordan} algebra.  
\end{definition}

\begin{remark}
	$J$ as above is sometimes referred to as a linear Jordan algebra, in contrast to the concept of quadratic Jordan algebras. These two concepts are equivalent when $\tfrac{1}{2} \in k$.
\end{remark}

\begin{example}
	An associative $k$-algebra $A$ admits a  canonical Jordan product given by $x\circ y= \frac{1}{2}(xy+yx)$. This new product on $A$ makes it a Jordan algebra, denoted by $A^{(+)}$.
\end{example}

J.M. Osborn (see \cite{HersteinRingswithinvolution}) showed that for a finitely generated associative algebra $A$ the Jordan algebra $A^{(+)}$ is finitely generated.

In \cite{ShestZelFPjordan},  Shestakov and Zelmanov considered the question whether for a finitely presented associative algebra $A$ the Jordan algebra $A^{(+)}$ is finitely presented. They proved (among other things) that 
\begin{enumerate}
	\item the Jordan algebra $k\la x, y \ra ^{(+)}$, where $k\la x, y \ra$ is the free associative algebra of rank $2$, is not finitely presented;
	\item let $A$ be a finitely presented associative algebra and let 
	$M_n(A)$ be the algebra of $n\times n$ matrices over $A$, and $n \geq 3$, then the Jordan algebra  $M_n(A)^{(+)}$ is finitely presented.
\end{enumerate}
For the borderline case of $M_2(A)$ they asked if the Jordan algebra $M_2(A)^{(+)}$ is finitely presented.

In this paper we give a positive answer to this question.

\begin{theorem}
Let $A$ be a finitely presented associative $k$-algebra. Then the Jordan algebra $ M_2(A)^{(+)}$ is finitely presented.
\end{theorem}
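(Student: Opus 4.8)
The plan is to route the problem through the Tits--Kantor--Koecher (TKK) correspondence and reduce it to the finite presentability of a Steinberg Lie algebra, which is the subject of the companion paper \cite{ZeZel}. Write $J = M_2(A)^{(+)}$. The first step is to make the TKK dictionary explicit for this $J$: partitioning $\{1,2,3,4\}$ into $\{1,2\}\cup\{3,4\}$ equips $\mathfrak{sl}_4(A)$ with the $3$-grading
\[ \mathfrak{sl}_4(A) = \mathfrak{g}_{-1}\oplus \mathfrak{g}_0 \oplus \mathfrak{g}_{+1}, \]
in which $\mathfrak{g}_{\pm 1}$ are the off-diagonal $2\times 2$ blocks (each a copy of $M_2(A)$), $\mathfrak{g}_0$ is the block-diagonal part, and the triple product $\{x,y,z\} = [[x,y],z]$ on $\mathfrak{g}_{+1}$ reproduces the Jordan structure of $M_2(A)^{(+)}$. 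Consequently the TKK Lie algebra of $J$ is a central cover of $\mathfrak{sl}_4(A)$, whose universal central extension is the Steinberg Lie algebra $\mathfrak{st}_4(A)$.

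Second, I would invoke the general correspondence that this paper is built to establish: a unital Jordan algebra $J$ is finitely presented if and only if the Steinberg/TKK Lie algebra attached to it is finitely presented. For the theorem at hand only the implication ``Lie algebra finitely presented $\Rightarrow J$ finitely presented'' is required. The mechanism is to take a finite Lie presentation of $\mathfrak{st}_4(A)$, keep the generators that lie in the copy $\mathfrak{g}_{+1}\cong J$, and translate the finitely many defining Lie relations into finitely many Jordan relations, using that $\mathfrak{g}_0 = [\mathfrak{g}_{+1},\mathfrak{g}_{-1}]$ and that the grading lets one rewrite every homogeneous bracket identity in terms of the triple product; homogeneity ensures that no infinite family of auxiliary relations is produced.

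Third, it remains to cite \cite{ZeZel}, where $\mathfrak{st}_n(A)$ is shown to be finitely presented whenever $A$ is a finitely presented associative algebra, and in particular for the rank $n = 4$ occurring above. Feeding this into the reduction of the previous paragraph yields that $M_2(A)^{(+)}$ is finitely presented, completing the proof. This is also where the passage to the borderline case becomes transparent: $M_n(A)^{(+)}$ corresponds to $\mathfrak{sl}_{2n}(A)$, so the Shestakov--Zelmanov range $n\geq 3$ matches ranks $\geq 6$, and the open case $M_2(A)$ is precisely the $\mathfrak{sl}_4(A)$ input supplied by \cite{ZeZel}.

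I expect the real work to be the faithfulness of the translation in the second step, rather than the identification or the citation. One must check that the central kernel separating $\mathfrak{sl}_4(A)$ from $\mathfrak{st}_4(A)$ does not force infinitely many relations (a genuine danger, since universal central extensions of finitely presented perfect algebras need not be finitely presented in general), that reconstructing $\mathfrak{g}_0$ together with its action on $J$ from Jordan data costs only finitely many relations, and that the unit of $M_2(A)^{(+)}$ provides the grading derivation needed to recover the $3$-grading intrinsically. The hypothesis $\tfrac{1}{2} \in k$ and the concrete matrix structure of $M_2(A)$ are what make these finiteness checks go through.
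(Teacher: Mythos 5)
Your proposal is correct and follows essentially the same route as the paper: identify $K(M_2(A)^{(+)})$ with $\widehat{\mathfrak{sl}_4(A)}$ through the $\{1,2\}\cup\{3,4\}$ block $3$-grading (matching the Kassel--Loday/Gao--Shang Steinberg presentation against the TKK presentation), prove that finite presentability of the TKK Lie algebra descends to the unital Jordan algebra (Proposition \ref{Tkk Jor presentation}), and cite \cite{ZeZel} for finite presentability of $\widehat{\mathfrak{sl}_4(A)}$. The one detail your sketch leaves implicit is that the descent is carried out in two stages (Lemmas \ref{lemma1} and \ref{lemma2}): first from the Lie algebra to finite presentability as a Jordan \emph{triple system} (via a finite subfamily of the canonical relations (\ref{K1}), (\ref{K2}) and a two-way homomorphism argument), and then, using the unit through the idempotent relation $\omega^2-\omega=0$ together with $ab=\{a,\omega,b\}$, from the triple system to finite presentability as a Jordan algebra --- which is exactly where your ``faithfulness of the translation'' concern is discharged.
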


The proof of this theorem is based on the paper \cite{ZeZel} on universal central extensions of Lie algebras $\mathfrak{sl}_n(A)$.

\begin{definition}
Let $A$ be a finitely presented associative algebra. $\mathfrak{sl}_n(A)$ is the Lie algebra generated by off-diagonal matrix units among $n \times n$ matrices; forming a subalgebra of $\mathfrak{gl}_n(A)$. Equivalently,  $\slnr=\{X\in \mathfrak{gl}_n(A) \mid \mbox{tr} (X) \in [A,A] \}$. 
\end{definition}

\begin{definition}
Let $\mathcal{L}, \mathfrak{g}$ be $k$-Lie algebras where $\mathfrak{g}$ is perfect.  A surjective Lie homomorphism $\pi: \mathcal{L} \rightarrow \mathfrak{g} $ is called a central extension of $\mathfrak{g}$ if $\ker(\pi)$ is central in $\mathcal{L}$. This $\pi: \mathcal{L} \rightarrow \mathfrak{g} $ is called a universal central extension if there exists a unique homomorphism $\phi: \mathcal{L} \rightarrow \mathcal{M}$ from $f: \mathcal{L}\rightarrow \mathfrak{g}$ to any other central extension $p: \mathcal{M}\rightarrow \mathfrak{g}$ of $\mathfrak{g}$. In other words, $f=p\circ\phi$. The universal central extension of $\mathfrak{g}$ is customarily denoted as $\widehat{\mathfrak{g}}$. Perfectness of $\mathfrak{g}$ guarantees the existence of $\widehat{\mathfrak{g}}$, which is necessarily perfect (see \cite{Neher}).
\end{definition}

In \cite{ZeZel} it was shown that for a finitely presented associative algebra $A$ and $n\geq 3$, the Lie algebra $\widehat{\mathfrak{sl}_n(A)}$ is finitely presented.  As a consequence of the Lie-Jordan correspondence results of this paper, we may sharpen this result:

\begin{theorem}
	The universal central extension of ${\mathfrak{sl}_2(k\la x, y \ra)}$ is not finitely presented as a Lie algebra.
\end{theorem}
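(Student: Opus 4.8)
The plan is to realize $\widehat{\mathfrak{sl}_2(k\la x,y\ra)}$ as the universal (centrally closed) Tits--Kantor--Koecher Lie algebra of the Jordan algebra $k\la x,y\ra^{(+)}$, and then to run the Lie--Jordan correspondence of this paper backwards against the Shestakov--Zelmanov theorem. Write $A=k\la x,y\ra$. By item~(1) of the Shestakov--Zelmanov results recalled in the introduction, the Jordan algebra $A^{(+)}$ is \emph{not} finitely presented. The correspondence proved here supplies, in the direction relevant to us, the implication that finite presentability of the $\mathrm{TKK}$ algebra of a Jordan algebra $J$ forces finite presentability of $J$; its contrapositive applied to $J=A^{(+)}$ shows that $\mathrm{TKK}(A^{(+)})$ is not finitely presented as a Lie algebra. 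Thus the theorem reduces entirely to the identification $\mathrm{TKK}(A^{(+)})\cong\widehat{\mathfrak{sl}_2(A)}$.

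To make that identification I would start from the standard $3$-grading
\[
\mathfrak{sl}_2(A)=A\,E_{12}\ \oplus\ \fkh\ \oplus\ A\,E_{21},
\]
with $\fkh$ the diagonal (trace-type) part, so that the two outer pieces are each a copy of $A$. A short bracket computation identifies the resulting Jordan pair on $(A\,E_{12},A\,E_{21})$ with the Jordan pair underlying $A^{(+)}$: for $a,b,c\in A$ one finds $[[a\,E_{12},b\,E_{21}],c\,E_{12}]=(abc+cba)\,E_{12}$, which is precisely the Jordan triple product of $A^{(+)}$. Hence $\mathfrak{sl}_2(A)$ is a $3$-graded Lie algebra whose degree $\pm1$ parts realize the Jordan pair of $A^{(+)}$, i.e.\ a $\mathrm{TKK}$ algebra of $A^{(+)}$; passing to universal central extensions and checking that the central extension respects the grading (so that $\widehat{\mathfrak{sl}_2(A)}$ remains $3$-graded with the same outer pieces) matches it with the centrally closed $\mathrm{TKK}$ algebra to which the correspondence applies. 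Note that $\mathfrak{sl}_2(A)$ is perfect, so that its universal central extension exists, because it is generated by its degree $\pm1$ parts.

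The main obstacle is this last matching of universal central extensions. In contrast with the $n\ge 3$ situation of \cite{ZeZel}, the rank-one type of $\mathfrak{sl}_2$ makes its central extensions genuinely delicate (this is exactly why $n=2$ was the borderline case left open), so one cannot be cavalier about which flavour of $\mathrm{TKK}$ --- the plain one or the universal/centrally closed one --- is in play. The crux is to verify that the version of $\mathrm{TKK}$ for which the present correspondence is stated, after taking its universal central extension, is $\widehat{\mathfrak{sl}_2(A)}$ rather than $\mathfrak{sl}_2(A)$ itself or some intermediate extension; the grading bookkeeping of the previous paragraph is what controls this. Once the central-extension accounting is pinned down, the conclusion is immediate: $A^{(+)}$ fails to be finitely presented by Shestakov--Zelmanov, so by the contrapositive of the correspondence neither does $\mathrm{TKK}(A^{(+)})\cong\widehat{\mathfrak{sl}_2(A)}$.
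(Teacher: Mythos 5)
Your overall strategy coincides with the paper's: establish $\widehat{\sltwo(A)}\cong K(A^{(+)})$ for $A=k\la x,y\ra$, then apply the contrapositive of Proposition \ref{Tkk Jor presentation} together with Shestakov--Zelmanov's non-finite-presentability of $A^{(+)}$. That reduction is fine. The gap is in the one step you yourself flag as ``the main obstacle'' and then do not close: the identification of $\widehat{\sltwo(A)}$ with the universal TKK algebra. Your proposed mechanism --- ``checking that the central extension respects the grading (so that $\widehat{\mathfrak{sl}_2(A)}$ remains $3$-graded with the same outer pieces)'' --- is not mere bookkeeping and, as stated, does not go through formally. If $L=L_{-1}\oplus L_0\oplus L_1$ is $3$-graded and perfect, its universal central extension $\widehat{L}$ is a priori only $5$-graded: the components $[\widehat{L}_{\pm 1},\widehat{L}_{\pm 1}]$ map to $[L_{\pm1},L_{\pm1}]=0$, hence are central, but they need not vanish. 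Showing they vanish for $\sltwo(A)$ is exactly the relation $[X_{ij}(A),X_{ij}(A)]=0$ appearing in the Kassel--Loday/Gao presentation (Theorem \ref{st2pres}), i.e.\ it is a theorem about $\widehat{\sltwo(A)}$, not a consequence of abstract grading considerations. The paper avoids this entirely by \emph{quoting} that presentation and then observing, after a change of variables, that it coincides verbatim with the defining presentation (\ref{K1})--(\ref{K2}) of $K(A^{(+)})$; matching two explicit presentations also disposes of the second issue your sketch leaves open, namely that the relations must be \emph{defining} for $\widehat{\sltwo(A)}$ (universality on both sides), not merely satisfied in it.

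A smaller but real error: your bracket computation gives $[[aE_{12},bE_{21}],cE_{12}]=(abc+cba)E_{12}$, which is \emph{not} the Jordan triple product of $A^{(+)}$ --- that product is $\{a,b,c\}=\tfrac{1}{2}(abc+cba)$, so you are off by a factor of $2$. This is repairable (the paper's rescaling $x_-(a):=X_{21}(\tfrac{1}{2}a)$ does precisely this), but without it the map $x_u^{\pm}\mapsto$ matrix units does not satisfy (\ref{K2}) and the claimed isomorphism fails on the nose. So: right reduction, right target isomorphism, but the central-extension identification needs the cited presentation of $\widehat{\sltwo(A)}$ (or an equivalent computation of the relevant $2$-cocycles) rather than the grading argument you sketch, plus the $\tfrac{1}{2}$ normalization.
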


\begin{remark}
	The result in \cite{ZeZel} holds without the restriction  $\text{char}(F) \neq 2$.
\end{remark}

\section{Finite Presentation of Jordan systems}
\noindent\textit{\underline{Jordan triple systems}} 

A Jordan algebra $J$ is equipped with a Jordan triple product $\{a,b,c\}=(ab)c+a(bc)-b(ac)$. If $J=A^{(+)}$, where $A$ is an associative algebra, the $\{a,b,c\}=\tfrac{1}{2}(abc+cba)$.

\begin{definition}[See \cite{MeybergLectures}]
A vector space $V$ over a field $k$ containg $\tfrac{1}{2}$ is called a \textit{Jordan triple system} if it admits a trilinear product $\{ \ , \ , \}: V^3 \rightarrow V$ that is symmetric in the outer variables, while satisfying the identities 

\begin{enumerate}
	\item $\{a,b,\{a,c,a\}\}=\{a,\{b,a,c\},a\}$,
		\item $\{\{a,b,a\},b,c\}=\{a,\{b,a,b\},c\}$,
			\item $\{a,\{b,\{a,c,a\},b\},a\}=\{ \{a,b,a\}, c, \{a,b,a\} \}$
\end{enumerate}
for any $a,b,c \in V$. 
\end{definition}

\begin{remark}
When $\tfrac{1}{6} \in k$, the three defining identities of a Jordan triple system may be merged in to $\{a,b,\{c,d,e\}\}=\{  \{a,b,c\} ,d,e\}-\{c,\{b,a,d\},e\}+\{a,b,\{c,d,e\}\}$.
\end{remark}

It is easy to see that every Jordan algebra is a Jordan triple system with respect to the Jordan triple product $\{ \ ,\  ,\ \}$.

\noindent\textit{\underline{TKK constuction for a Jordan triple system.}}

 For a Jordan algebra $J$ we let $K(J)$ be the Tits-Kantor-Koecher (abrreviated TKK) Lie algebra of $J$ viewed as a Jordan triple system.
 
 \begin{definition}[See \cite{ShestZelFPjordan}. See also {\cite[Section 5]{BenkartSmirnovBC1}}, \cite{AllisonGaoUnitary}] \label{tkkdef}  
 Let $T$ be a Jordan triple system.  Let $\{e_u, u\in I\}$ be a basis of the vector space $T$. Let \[\{e_u,e_v, e_w\}=\sum\gamma^t_{uvw}e_u, \] where $\  t,u,v,w \in I$; $\gamma_{uvw}^t\in k$. 
 The Lie algebra $K(T)$ is presented by generators $x_u^{\pm}, u \in I $ and relations  
 \begin{gather}
 [x_u^\sigma, x_v^\sigma]=0,  \tag{K1}\label{K1}\\
 \quad \quad  \left[[x_u^\sigma, x_v^{-\sigma}], x_w^\sigma\right]-\sum\gamma^t_{uvw}e_t=0, \tag{K2}\label{K2}
 \end{gather}where $\  t,u,v,w \in I$; $\sigma=\pm$. This Lie algebra is called the \textit{(Universal) Tits-Kantor-Koecher Lie algebra associated to $T$}. It is obvious that this construction is basis-independent.
 \end{definition}

\begin{remark}
	The above (universal) TKK construction $T \rightarrow K(T)$ is a functor: see for example \cite{CavenySmirnovJordanLieCategories}. In other words, any homomorphism of Jordan Triple systems $T_1 \stackrel{\phi}{\rightarrow} T_2$ gives rise to a homomorphism 
	$K(T_1) \stackrel{\phi}{\rightarrow} K(T_2)$ of Lie algebras. A quick proof follows from the definition of $K(T)$ above: choose a basis $B_1 \cup B_2$ of $T_1$ such that $\phi(B_1)$ is linearly independent while $\phi(B_2)=0$. Extend $\phi(B_1)$ to a basis of $T_2$ and it is clear that we may extend $\phi$.
\end{remark}


\begin{lemma}\label{lemma1}
	Let $T$ be a Jordan triple system. If the Lie algebra $K(T)$ is finitely presented then the Jordan triple system is finitely presented as well.
\end{lemma}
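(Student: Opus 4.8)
The plan is to reduce finite presentability of $T$ to two tasks---that $T$ be finitely generated, and that its ideal of relations be finitely generated---and to transport each across the TKK functor using the $3$-grading intrinsic to $K(T)$. Throughout I will use three structural features of the universal construction, all visible from Definition~\ref{tkkdef}: the generators $x_u^{\pm}$ place $K(T)$ in a $\mathbb{Z}$-grading supported in degrees $-1,0,1$, say $K(T)=K_{-1}\oplus K_0\oplus K_1$ (relation \eqref{K1} kills degrees $\pm2$); the assignment $u\mapsto x_u^{+}$ is a linear isomorphism $T\xrightarrow{\ \sim\ }K_1$ (and likewise $T\cong K_{-1}$), so that $T$ embeds as the degree-one part and \eqref{K2} reads $[[x_a^{+},x_b^{-}],x_c^{+}]=x_{\{a,b,c\}}^{+}$; and $K_0=[K_1,K_{-1}]$. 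The faithfulness of $T\cong K_1$ is the one input I would simply cite.

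First I would establish that $T$ is finitely generated. Since $K(T)$ is finitely presented it is finitely generated; decomposing a finite generating set into graded components and using $K_0=[K_1,K_{-1}]$, one may replace it by a finite generating set lying in $K_1\cup K_{-1}$. Transporting these elements to $T$ via $T\cong K_{\pm1}$ gives a finite set $X\subseteq T$, and $X$ generates $T$ as a Jordan triple system: every element of $K_1$ is an iterated bracket of the chosen generators that lands in degree one, and by \eqref{K2} each such bracket is exactly the corresponding iterated triple product in $T$. Fix such an $X$, let $\mathcal{F}=\mathcal{F}(X)$ be the free Jordan triple system on $X$, let $\pi\colon\mathcal{F}\twoheadrightarrow T$ be the resulting surjection, and set $N=\ker\pi$.

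The crux is to compute the kernel of the induced Lie surjection $K(\pi)\colon K(\mathcal{F})\twoheadrightarrow K(T)$ (surjective by the functoriality Remark). Let $I_N\subseteq K(\mathcal{F})$ be the Lie ideal generated by $\{x_n^{+},x_n^{-}:n\in N\}$. The inclusion $I_N\subseteq\ker K(\pi)$ is immediate since $x_n^{\pm}\mapsto x_{\pi(n)}^{\pm}=0$. For the reverse inclusion I would build an inverse $K(T)\to K(\mathcal{F})/I_N$ directly from the presentation in Definition~\ref{tkkdef}, sending $x_{\bar u}^{\pm}$ to $x_u^{\pm}\bmod I_N$ for any lift $u$ of $\bar u$. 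This is well defined because $u\mapsto x_u^{\pm}$ is linear and two lifts differ by an element of $N$, and it respects \eqref{K1}--\eqref{K2} because $\pi$ is a homomorphism of triple systems; the two composites are identities, so $\ker K(\pi)=I_N$. The same linearity-and-\eqref{K2} bookkeeping shows that whenever an ideal $N$ is generated by a set $S$, the Lie ideal $I_N$ coincides with the Lie ideal generated by $\{x_s^{\pm}:s\in S\}$.

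Now the transfer is mechanical. Because $X$ is finite, $K(\mathcal{F})$ is finitely generated (by the $x_{x_i}^{\pm}$), while $K(T)=K(\mathcal{F})/I_N$ is finitely presented; since a quotient of a finitely generated Lie algebra by an ideal is finitely presented precisely when that ideal is finitely generated, $I_N$ is finitely generated as a Lie ideal. As $I_N$ is generated by the elements $x_n^{\pm}$, finitely many already suffice, yielding $r_1,\dots,r_k\in N$ with $I_N=\langle x_{r_1}^{\pm},\dots,x_{r_k}^{\pm}\rangle$. Put $N_0=\langle r_1,\dots,r_k\rangle$, an ideal of $\mathcal{F}$, and $T_0=\mathcal{F}/N_0$, which is finitely presented and surjects onto $T$ via some $q$. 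By the last sentence of the previous paragraph $\ker\!\big(K(\mathcal{F})\to K(T_0)\big)=I_{N_0}=I_N$, so $K(q)\colon K(T_0)\to K(T)$, being induced by $\mathrm{id}_{K(\mathcal{F})}$, is an isomorphism. Restricting to degree-one parts and using $K_1\cong\mathrm{id}$ identifies $K(q)|_{K_1}$ with $q$ itself; an isomorphism on $K_1$ forces $q$ injective, hence bijective, so $T\cong T_0$ is finitely presented. The only genuinely delicate step is the kernel computation of the third paragraph, which rests on the linearity of $u\mapsto x_u^{\pm}$ and on the faithful embedding $T\cong K_1(T)$.
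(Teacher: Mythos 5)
Your proof is correct, but it takes a genuinely different route from the paper's. The paper works directly with the structure-constant presentation of Definition~\ref{tkkdef}: since $K(T)$ is finitely presented, a finite subset $S$ of the basis index set is chosen so that $K(T)$ is presented by the relations $R(S)$ having all indices in $S$; the candidate triple system $\widetilde{T}$ is then defined by the corresponding finite set of Jordan relations $R_J(S)$, and two mutually inverse homomorphisms are constructed at the Lie level, whence $\widetilde{T}\cong T$. You instead modularize the argument into four reusable pieces: finite generation of $T$ via the short grading and $K_0=[K_1,K_{-1}]$; the right-exactness computation $\ker K(\pi)=I_N$, i.e.\ $K(\mathcal{F}/N)\cong K(\mathcal{F})/I_N$; the standard universal-algebra lemma that if $M$ is finitely generated and $M/I$ is finitely presented then $I$ is finitely generated as an ideal (proved by passing to a free cover of $M$); and the descent of finitely many Lie-ideal generators of $I_N$ to finitely many Jordan relations via your \eqref{K2}-bookkeeping observation that $I_{N_0}$ depends only on triple-system generators of $N_0$. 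Both arguments ultimately rest on the same two pillars --- functoriality of $K$ and the faithfulness $T\cong K_1$, which the paper also uses implicitly when it deduces bijectivity of $\widetilde{T}\rightarrow T$ from the Lie isomorphism $K(\widetilde{T})\cong K(T)$. What your route buys: it isolates a general statement (the TKK functor carries presentations to presentations) and it sidesteps the mild subtlety in the paper of extracting a finite subpresentation whose relations are index-closed in $S$ (including the target indices $t$ of the structure constants). What it costs: reliance on the B.H.~Neumann/P.~Hall-type lemma, where the paper's explicit two-map construction is self-contained. One caveat: your phrase ``precisely when'' overstates that lemma --- the converse direction (ideal finitely generated implies quotient finitely presented) requires $M$ finitely \emph{presented}, not merely finitely generated, and $K(\mathcal{F})$ is not obviously finitely presented; taking $I=0$ in a finitely generated, non-finitely-presented $M$ gives a counterexample. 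Since you invoke only the true direction (quotient f.p.\ forces the kernel to be a finitely generated ideal), the proof stands as written.
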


\begin{proof}
	Let $\{e_u, u\in I\}$ be a basis of the vector space $T$ and let $ \{e_u,e_v, e_w\}=\sum\gamma^t_{uvw}e_u$, where $\  t,u,v,w \in I$; $\gamma_{uvw}^t\in k$. Define $K(T)$ as in Definition \ref{tkkdef}. For a subset $S\subset I$, let $R(S)$ be the set of those relations from (\ref{K1}),(\ref{K2}) that have all indices $t,u,v,w$ lying in $S$. 
	
	By our assumption there exists a finite subset $S\subset I$ such that the algebra $K(T)$ is generated by $x_u^\pm, \ u\in S$, and presented by the set of relations $R(S)$. Let $\widetilde{T}$ be the Jordan triple system presented by generators $y_u, \ u \in S$ and the set of relations \[R_J(S): \{y_u, y_v, y_w\}-\sum\gamma^t_{uvw}y_t=0,\] where $u,v,w, t \in S$. We claim that the mapping $y_u \mapsto e_u, \ u \in S$, extends to an isomorphism $\widetilde{T} \cong T$. Since the elements $e_u, u \in S$ satisfies the relations from $R_J(S)$ it follows that the mapping $y_u \stackrel{\varphi}{\rightarrow} e_u, \ u \in S$, extends to a homomorphism $\widetilde{T} \stackrel{\varphi}{\rightarrow}T$. This homomorphism gives rise to a homomorphism $K(\widetilde{T}) \stackrel{\varphi}{\rightarrow} K(T)$.
	
	Consider the Lie algebra $K(\widetilde{T})$. Since the elements $y_u^\pm \in K(\widetilde{T}), \ u \in S$ satisfy the relations $R(S)$,  the mapping $x_u^\pm \rightarrow y_u^\pm, u \in S$ extends to a homomorphism $K(T) \rightarrow K(\widetilde{T})$.  Hence the homomorphism $K(\widetilde{T}) \stackrel{\varphi}{\rightarrow}K(T)$ is an isomorphism. This implies that the homomorphism $\widetilde{T} \stackrel{\varphi}{\rightarrow} T$ is bijective, hence an isomorphism. 
\end{proof}

\begin{lemma}\label{lemma2}
	Let $J$ be a unital Jordan algebra. If $J$ is finitely presented as a Jordan triple system, then $J$ is finitely presented as a Jordan algebra.
\end{lemma}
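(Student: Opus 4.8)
The plan is to use the fact that in a \emph{unital} Jordan algebra the bilinear product is recoverable from the triple product through the unit: since $\{x,1,y\}=(x\cdot 1)y+x(1\cdot y)-1\cdot(xy)=xy$, the Jordan-algebra and Jordan-triple structures on $J$ determine one another once the unit $1$ is named. So the whole difficulty is to encode, with finitely many \emph{Jordan-algebra} relations, both the given triple relations of $J$ and the single extra piece of data ``there is a unit.''

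First I would fix a finite triple-system presentation of $J$, say with generators $a_1,\dots,a_m$ and a finite relation set $R$. Because the triple product $\{a,b,c\}=(ab)c+a(bc)-b(ac)$ is a Jordan-algebra polynomial, the Jordan subalgebra generated by $a_1,\dots,a_m$ is a sub-triple-system containing every $a_i$, hence equals $J$; thus $J$ is finitely generated as a Jordan algebra, and in particular the unit can be written as $1=w(a_1,\dots,a_m)$ for some triple-product word $w$. I would then write down the Jordan algebra $B$ presented by generators $x_1,\dots,x_m,e$ and the finite relation set consisting of: (a) $e^2=e$ and $e\,x_i=x_i$ for all $i$; (b) each relation of $R$ with every triple bracket $\{u,v,w\}$ replaced by its expansion $(uv)w+u(vw)-v(uw)$; and (c) $e=w(x_1,\dots,x_m)$, again with brackets expanded. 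There is an evident Jordan-algebra epimorphism $B\twoheadrightarrow J$ sending $x_i\mapsto a_i$ and $e\mapsto 1$, and the goal is to prove it is an isomorphism.

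The key step, and the one I expect to be the main obstacle, is to show that relations (a) already force the image $\bar e$ to be a two-sided unit of $B$, even though (a) only constrains $\bar e$ on the generators. Here I would invoke the Peirce decomposition with respect to the idempotent $\bar e$ (valid since $\tfrac12\in k$; see \cite{JacobsonJordanbook}): the Peirce $1$-space $B_1(\bar e)=\{b:\bar e\,b=b\}$ is a subalgebra of $B$, it contains $\bar e$ and every $\bar x_i$, and therefore contains all of $B$. Hence $\bar e\,b=b$ for every $b$, so $\bar e$ is the unit and $B$ is unital.

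Finally I would produce the inverse map. Being unital, $B$ is a Jordan triple system, and by (b) the elements $x_1,\dots,x_m$ satisfy all relations of $R$ computed with the triple product of $B$; the universal property of the triple presentation of $J$ then yields a triple homomorphism $\psi\colon J\to B$ with $\psi(a_i)=x_i$. By (c) one gets $\psi(1)=w(x_1,\dots,x_m)=\bar e$, so $\psi$ carries unit to unit, and since $xy=\{x,1,y\}$ a unit-preserving triple homomorphism between unital Jordan algebras is automatically a Jordan-algebra homomorphism. The two maps restrict to the identity on the generating sets, so they are mutually inverse and $B\cong J$. As $B$ is finitely presented by construction, so is $J$. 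The only genuinely Jordan-theoretic input is the Peirce step; the remaining verifications (that bracket-expansion is compatible with passing to the quotient, and that $\psi$ is well defined) are routine.
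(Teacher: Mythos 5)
Your proof is correct and takes essentially the same route as the paper's: expand the finite triple-system relation set into Jordan-algebra relations, adjoin an idempotency relation for a word $w$ representing $1$, and play the two presentations' universal properties against each other to obtain mutually inverse maps. The only real difference is that you add the generator $e$ and the relations $e x_i = x_i$, making the Peirce-decomposition verification that $\bar e$ is a unit immediate, whereas the paper uses only $\omega^2=\omega$ and leaves implicit the corresponding step (that $\omega x_i = x_i$ is forced by the expanded triple relations, after which the same Peirce argument applies) --- so your version actually spells out the one point the paper glosses over.
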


\begin{proof}
	Let elements $a_1, \ldots, a_n \in J$ generate $J$ as a Jordan triple system. Then they clearly generate $J$ as a Jordan algebra, according to the formula $\{a,b,c\}=(ab)c+a(bc)-b(ac)$. 
	
	Let $\mathfrak{T}$ and $\mathfrak{J}$ be the free Jordan triple system and the free Jordan algebra on the set of free generators $x_u, \ u \geq 1$, respectively. Since $\mathfrak{J}$ is a Jordan triple system with respect to the Jordan triple product there exists a natural homomorphism $\mathfrak{T} \rightarrow \mathfrak{J} , \ a \mapsto \widetilde{a}$, that extends the identical mapping  $x_u \rightarrow x_u, \ i \geq 1$.
	
	Let $R \subset \mathfrak{T}$ be a finite subset, all elements from $R$ become zero when evaluated at $a_u, \ 1 \leq u \leq n$, and $R$ defines $J$ as a Jordan triple system. Since elements $a_1, \ldots a_n$ generate $J$ as a Jordan triple system, there exists an element $\omega(x_1, \ldots, x_n) \in \mathfrak{T}$ such that $\omega(a_1, \ldots, a_n)=1$.
	
	Let $P=\widetilde{R} \cup \{\omega(x_1, \ldots, x_n)^2- \omega(x_1, \ldots, x_n) \} \subset \mathfrak{J}$. Consider the Jordan algebra $\widetilde{J}=\la x_1, \ldots, x_n \mid P=(0) \ra$. Our aim is to show that $\widetilde{J} \cong J$.
	
	Since the generators $a_1, \ldots, a_n$ satisfy the relations $P$ it follows that there exists a surjective homomorphism $\widetilde{J} \stackrel{\varphi}{\rightarrow}J$,  $\varphi(x_i)=x_i, \ 1\leq i \leq n$. We claim that the elements $x_1, \ldots , x_n$ generate the Jordan algebra $\widetilde{J}$ as a Jordan triple system: Indeed, let $\widetilde{J}' $ be the Jordan triple system generated by $x_1 \ldots, x_n$ in $\widetilde{J}$. The relations $P$ imply that the element $\omega(x_1, \ldots, x_n)$ is an identity element of the algebra $\widetilde{J}$ and $\omega(x_1, \ldots, x_n) \in \widetilde{J}'$. If $a, b \in \widetilde{J}'$ then $ab=\{a,\omega(x_1, \ldots, x_n), b\} \in \widetilde{J}'$, which implies $\widetilde{J}' \cong \widetilde{J}$.
	
	Since the generator $x_1, \ldots, x_n$ of the Jordan triple system $\widetilde{J}$ satisfy the relations $R$ it follows that there exists a homomorphism of Jordan triple systems $J \stackrel{\psi}{\rightarrow} \widetilde{J}$ that extends $\psi(x_u)=x_u, \ 1 \leq u \leq n $. This implies that $\varphi, \psi$ are isomorphisms of Jordan triple systems. In particular, $\varphi$ is a bijection. Hence $\varphi$ is an isomorphism of Jordan algebras.\end{proof}

Lemmas \ref{lemma1} , \ref{lemma2} imply the following proposition:

\begin{prop}\label{Tkk Jor presentation}
	Let $J$ be a Jordan algebra with $1$. Then, if its TKK Lie algebra $K(J)$ is finitely presented, then $J$ is finitely presented.
\end{prop}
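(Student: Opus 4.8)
The plan is to obtain the proposition by composing Lemmas~\ref{lemma1} and~\ref{lemma2}, reading them as a chain of implications that transports finite presentability across the two change-of-category steps: Lie algebra $\leadsto$ Jordan triple system $\leadsto$ Jordan algebra. The content is entirely carried by the two preceding lemmas, so the work of the proof is essentially bookkeeping: matching the hypotheses and objects correctly.

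First I would set $T:=J$ regarded as a Jordan triple system under $\{a,b,c\}=(ab)c+a(bc)-b(ac)$, which is legitimate by the earlier observation that every Jordan algebra is a Jordan triple system with respect to this product. By the very definition of $K(J)$ --- the TKK Lie algebra of $J$ viewed as a Jordan triple system --- we have the identification $K(J)=K(T)$. Hence the hypothesis that $K(J)$ is finitely presented as a Lie algebra is precisely the hypothesis of Lemma~\ref{lemma1} applied to $T$, and that lemma yields that $T=J$ is finitely presented as a Jordan triple system. Note that no unitality assumption is needed for this step.

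Next I would feed this conclusion into Lemma~\ref{lemma2}, where the standing hypothesis that $J$ is unital becomes essential: that lemma states exactly that a unital Jordan algebra which is finitely presented as a Jordan triple system is finitely presented as a Jordan algebra. Since $J$ is unital and was just shown to be finitely presented as a Jordan triple system, we conclude that $J$ is finitely presented as a Jordan algebra, which is the assertion. I do not expect any genuine obstacle here; the only point requiring care is the identification $K(J)=K(T)$, ensuring that the TKK construction entering the proposition is the same object analyzed in Lemma~\ref{lemma1}.
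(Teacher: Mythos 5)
Your proposal is correct and matches the paper exactly: the paper states that Proposition~\ref{Tkk Jor presentation} follows directly from Lemmas~\ref{lemma1} and~\ref{lemma2}, which is precisely your chain (Lemma~\ref{lemma1} applied to $J$ viewed as a Jordan triple system via $\{a,b,c\}=(ab)c+a(bc)-b(ac)$, then Lemma~\ref{lemma2} using unitality). Your bookkeeping of where the unit hypothesis enters is the right reading of the paper's argument.
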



\section{Non-finite presentation of $\widehat{\mathfrak{sl}_2(A)}$}

\begin{theorem}\label{sl2 non fp theorem} 
Let $k$ be a field where $char(k)\neq 2$. Then $\widehat{\mathfrak{sl}_2(k\la x, y \ra)}$ is not finitely presented.
\end{theorem}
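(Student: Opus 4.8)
The plan is to realize $\widehat{\mathfrak{sl}_2(A)}$, for $A=k\la x,y\ra$, as the Tits--Kantor--Koecher Lie algebra $K(A^{(+)})$ of the unital Jordan algebra $A^{(+)}$, and then to run Proposition \ref{Tkk Jor presentation} backwards: since Shestakov--Zelmanov \cite{ShestZelFPjordan} prove that $k\la x,y\ra^{(+)}$ is not finitely presented as a Jordan algebra, no finite presentation of its TKK Lie algebra can exist, and the identification transfers this failure to $\widehat{\mathfrak{sl}_2(A)}$. Thus the entire weight of the argument rests on the identification $K(A^{(+)})\cong\widehat{\mathfrak{sl}_2(A)}$.

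First I would equip $\mathfrak{sl}_2(A)$ with its standard $3$-grading $\mathfrak g_{-1}\oplus\mathfrak g_0\oplus\mathfrak g_1$ by strictly lower, diagonal, and strictly upper matrices. Writing $e_{12}(a),e_{21}(a)$ for the off-diagonal entries, the two outer pieces are each a copy of $A$, and a direct computation gives $[[e_{12}(a),e_{21}(b)],e_{12}(c)]=e_{12}(abc+cba)$, so the Jordan triple product the grading induces on $A$ is $abc+cba$, i.e. twice the triple product $\tfrac12(abc+cba)$ of $A^{(+)}$. Since $\tfrac12\in k$ this factor is harmless: sending $x_u^{+}\mapsto e_{12}(e_u)$ and $x_u^{-}\mapsto\tfrac12 e_{21}(e_u)$, for $\{e_u\}$ a basis of $A$, respects both (K1) and (K2), and so defines a graded Lie homomorphism $\phi\colon K(A^{(+)})\to\mathfrak{sl}_2(A)$, surjective because its image contains every off-diagonal entry.

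Next I would check that $\phi$ is a universal central extension. Being graded and bijective on the outer pieces $K_{\pm1}\cong A$, its kernel lies in $K_0$ and commutes with all generators $x_u^{\pm}$, hence is central; and $K(A^{(+)})$ is perfect because the unit of $A^{(+)}$ gives $\{1,1,w\}=w$, placing each generator in the derived algebra. The remaining, and genuinely substantive, point is that this perfect central extension is the universal one --- equivalently, that $K(A^{(+)})$ is centrally closed, so that the presentation (K1),(K2) already absorbs all central ambiguity. I expect this to be the main obstacle. It is precisely the content of TKK theory for rank-one Jordan systems, where the inner TKK algebra of $A^{(+)}$ is identified with the elementary $\mathfrak{sl}_2(A)$ (one checks that the elements $[e_{12}(a),e_{21}(b)]$ span exactly the degree-zero part $\{\mathrm{diag}(s,t):s+t\in[A,A]\}$) and $K(A^{(+)})$ with its universal central extension; I would either verify the universal property directly from (K1),(K2) by lifting the generators along an arbitrary central extension $\mathcal M\to\mathfrak{sl}_2(A)$ and using perfectness to kill the central corrections, or invoke \cite[Section 5]{BenkartSmirnovBC1}, \cite{AllisonGaoUnitary}, \cite{Neher}.

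With $K(A^{(+)})\cong\widehat{\mathfrak{sl}_2(A)}$ established, the theorem is immediate. The algebra $A^{(+)}=k\la x,y\ra^{(+)}$ is unital, and by Shestakov--Zelmanov it is not finitely presented as a Jordan algebra. Were $\widehat{\mathfrak{sl}_2(k\la x,y\ra)}\cong K(A^{(+)})$ finitely presented as a Lie algebra, Proposition \ref{Tkk Jor presentation} would force $A^{(+)}$ to be finitely presented as a Jordan algebra, a contradiction. Hence $\widehat{\mathfrak{sl}_2(k\la x,y\ra)}$ is not finitely presented.
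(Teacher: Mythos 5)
Your endgame coincides with the paper's: identify $\widehat{\mathfrak{sl}_2(A)}$ with $K(A^{(+)})$ and then play Proposition \ref{Tkk Jor presentation} against Shestakov--Zelmanov's result that $k\la x, y \ra^{(+)}$ is not finitely presented. Where you diverge is in how the identification is obtained. The paper takes as input the known presentation of the universal central extension itself (Theorem \ref{st2pres}, from Kassel--Loday \cite{Kassellodaycentral} and \cite{GaoSt2}): since $T(a,b)=[X_{12}(a),X_{21}(b)]$ is a redundant generator, eliminating it and rescaling $x_-(a)=X_{21}(\tfrac12 a)$ --- the same rescaling you found --- turns the remaining relations verbatim into (\ref{K1}), (\ref{K2}) for the triple system $A^{(+)}$, so the isomorphism is a pure change of presentation and no universality has to be re-proved. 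You instead rebuild the extension from scratch: a graded surjection $\phi\colon K(A^{(+)})\to\mathfrak{sl}_2(A)$ with central kernel (your argument is fine, granted the standard fact that $K_{\pm1}\cong A$ so $\phi$ is injective in the outer degrees), perfectness via $\{1,1,w\}=w$, and then central closedness, which you rightly flag as the main obstacle. Be warned that your first suggested route --- lifting generators along an arbitrary central extension and ``using perfectness to kill the central corrections'' --- is precisely where $n=2$ is treacherous: the abelianness relations $[x^\sigma_u,x^\sigma_v]=0$ lift only modulo the center, and unlike the case $n\geq 3$ there is no third root/index available to absorb the corrections; this failure is exactly why the rank-$2$ Steinberg presentation needs the extra generators $T(a,b)$ and relations appearing in Theorem \ref{st2pres}. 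Pursued directly, that step amounts to redoing the Kassel--Loday computation, not a routine perfectness argument. Your fallback --- invoking the literature on central closedness of the universal TKK algebra (\cite{Neher}, \cite{AllisonGaoUnitary}, \cite[Section 5]{BenkartSmirnovBC1}) --- does close the gap and is essentially equivalent to the paper's citation of Theorem \ref{st2pres}; with that citation in place your argument is correct, just longer than the paper's two-line change of variables.
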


The universal central extension of $sl_2(A)$ was discussed in Kassel-Loday \cite{Kassellodaycentral}; see also \cite{GaoSt2}. To elaborate:

\begin{theorem}[See \cite{GaoSt2}]\label{st2pres}
	$\widehat{\sltwo(A)}$   admits a presentation where the Lie algebra is generated by  $\{X_{12}(a), X_{21}(a), T(a,b) \mid  a,b \in A \}$, subjecting to the relations
	\begin{align*}
	X_{ij}(\alpha a + \beta b)&= \alpha X_{ij}(a) + \beta X_{ij}(b),\\
	T(a,b)&=[X_{12}(a),X_{21}(b)],\\
	[T(a,b),X_{12}(c)]&=X_{12}(abc+cba),\\
	[T(a,b),X_{21}(c)]&=-X_{12}(bac+cab),\\
	[X_{ij}(A),X_{ij}(A)]&=0.
	\end{align*}
	for all $1\leq i\neq j \leq 2$, $a,b,c \in A$, $\alpha, \beta \in k$.
\end{theorem}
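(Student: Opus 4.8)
The plan is to realize the presented Lie algebra as the universal central extension of $\mathfrak{sl}_2(A)$ by verifying the three defining features in turn: that it is a central extension of $\mathfrak{sl}_2(A)$, that it is perfect, and that it is universal. Let $L$ denote the Lie algebra given by the stated generators and relations, and define a map $\pi\colon L\to\mathfrak{sl}_2(A)$ on generators by $X_{ij}(a)\mapsto aE_{ij}$ and $T(a,b)\mapsto abE_{11}-baE_{22}$. First I would check that $\pi$ is a well-defined surjective homomorphism: each listed relation holds among the images, since $[aE_{12},bE_{21}]=abE_{11}-baE_{22}$ and $[abE_{11}-baE_{22},\,cE_{12}]=(abc+cba)E_{12}$ (and symmetrically for the $X_{21}$-relation), while surjectivity is immediate. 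Perfectness of $L$ follows from the relations themselves: taking $a=b=1$ gives $[T(1,1),X_{12}(c)]=2X_{12}(c)$, so every $X_{12}(c)$, and symmetrically every $X_{21}(c)$, lies in $[L,L]$, whence $T(a,b)=[X_{12}(a),X_{21}(b)]\in[L,L]$ as well.

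Next I would show $\ker\pi$ is central, so that $\pi$ is genuinely a central extension. The assignments $X_{12}\mapsto +1$, $X_{21}\mapsto -1$, $T\mapsto 0$ respect all relations, equipping $L$ with a short grading $L=L_{-1}\oplus L_0\oplus L_1$ in which $L_{\pm1}$ is the span of the $X_{ij}$'s and $L_0=\mathfrak h$ the span of the $T(a,b)$. Since $\pi$ is injective on $L_{\pm1}$, the kernel lies in $\mathfrak h$, and $\sum_iT(a_i,b_i)\in\ker\pi$ exactly when $\sum_ia_ib_i=0$ and $\sum_ib_ia_i=0$ in $A$. For such an element the mixed relations give
\[
\Big[\sum_iT(a_i,b_i),\,X_{12}(c)\Big]=X_{12}\Big(\big(\textstyle\sum_ia_ib_i\big)c+c\big(\textstyle\sum_ib_ia_i\big)\Big)=0,
\]
and likewise its bracket with each $X_{21}(c)$ vanishes; since $L$ is generated by the $X_{ij}$'s (using $T=[X_{12},X_{21}]$ and the Jacobi identity) this forces $\ker\pi$ to be central.

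Finally, universality, which I would verify via the universal mapping property directly. Given any central extension $p\colon M\to\mathfrak{sl}_2(A)$, fix $k$-linear lifts $\widehat X_{12},\widehat X_{21}\colon A\to M$ of $a\mapsto aE_{ij}$ — so the linearity relations hold on the nose — and set $\widehat T(a,b):=[\widehat X_{12}(a),\widehat X_{21}(b)]$, which is independent of the choice of lift since central terms die inside the bracket. The crux is to show that the remaining relations — the vanishings $[\widehat X_{ij}(a),\widehat X_{ij}(b)]=0$ and the two mixed identities $[\widehat T(a,b),\widehat X_{12}(c)]=\widehat X_{12}(abc+cba)$ and its $X_{21}$-analogue — hold \emph{exactly} in $M$ and not merely modulo its center. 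In each case both sides have equal image under $p$, so their difference is central; this central difference must then be shown to vanish, using the Jacobi identity together with the associativity identity $(ab)c=a(bc)$ in $A$. Once all defining relations of $L$ hold among the lifts, they assemble into a homomorphism $\phi\colon L\to M$ with $p\circ\phi=\pi$, and uniqueness of $\phi$ follows from perfectness of $L$; equivalently, one may package this as the assertion $H_2(L;k)=0$.

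I expect this last cancellation to be the main obstacle. For $n=2$ there are no auxiliary matrix indices, so the Steinberg-type arguments available for $\mathfrak{sl}_n(A)$ with $n\geq 3$ (where a third index lets one rewrite a problematic generator as a double bracket and thereby annihilate central ambiguities) are unavailable here. The vanishing of the central correction must instead be extracted entirely from the bilinear behaviour of $T(a,b)$ together with associativity — which is precisely the place where the first cyclic homology of $A$ enters the kernel and where the rank-two case genuinely differs from the higher-rank ones.
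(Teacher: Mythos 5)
First, note the benchmark here: the paper does not prove this theorem at all --- it is imported from Gao \cite{GaoSt2} (cf.\ also Kassel--Loday \cite{Kassellodaycentral}), so your attempt is measured against the cited proof. The scaffolding of your proposal is correct as far as it goes: $\pi$ is a well-defined surjection (you tacitly, and rightly, corrected the typo in the fourth relation, whose right-hand side should be $-X_{21}(bac+cab)$, not $-X_{12}(bac+cab)$); the short grading $L=L_{-1}\oplus L_0\oplus L_1$ with $L_{\pm 1}=X_{12}(A), X_{21}(A)$ and $L_0$ spanned by the $T(a,b)$ does confine $\ker\pi$ to $L_0$, where your computation shows centrality; and perfectness via $[T(1,1),X_{12}(c)]=2X_{12}(c)$ is fine (this already uses $\tfrac{1}{2}\in k$ and unitality of $A$). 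But all of this establishes only that $L$ is a \emph{perfect central extension} of $\mathfrak{sl}_2(A)$, which is strictly weaker than the theorem.

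The genuine gap is the universality step, which you explicitly defer and which is the entire content of the cited result. Moreover, as you formulated it, the step cannot be completed: after fixing arbitrary $k$-linear lifts, the discrepancy $c(a,b,c)=[\widehat T(a,b),\widehat X_{12}(c)]-\widehat X_{12}(abc+cba)$ is a central element of $M$ that genuinely depends on the choice of lift and in general does \emph{not} vanish --- replacing $\widehat X_{12}$ by $\widehat X_{12}+\mu$ with $\mu\colon A\to\ker p$ linear leaves the double bracket unchanged but shifts $c(1,1,c)$ by $-2\mu(c)$. So no amount of Jacobi identity plus associativity can force exact equality for a fixed arbitrary lift; one must \emph{construct} corrected lifts. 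The standard route is to lift the $\mathfrak{sl}_2$-triple $(X_{12}(1),\,T(1,1),\,X_{21}(1))$ and use the $\pm 2$-eigenspace decomposition of $\mathrm{ad}\,\widehat T(1,1)$ to define canonical lifts such as $\tfrac{1}{2}[\widehat T(1,1),\widehat X_{12}(a)]$, which are independent of all choices since central ambiguities die in brackets --- note this is precisely where $\mathrm{char}\,k\neq 2$ enters, a hypothesis your sketch never invokes, a telltale sign the hard part has not been engaged --- and then to verify the defining relations for these canonical lifts by a genuine computation. Equivalently, one proves $L$ is centrally closed ($H_2(L;k)=0$, as you suggest in your final packaging) and deduces universality by the pullback/recognition criterion; either way, the cancellation you flag as ``the main obstacle'' is not a finishing touch but the whole theorem, and your proposal stops exactly at its threshold.
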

\begin{proof}[Proof of Theorem \ref{sl2 non fp theorem}]
	
	 Define $x_+(a):=X_{12}(a)$, and $x_-(a):=X_{21}(\tfrac{1}{2}a)$. These new expressions give a new presentation of $\widehat{\sltwo(A)}$, in generators  $\{x_+(a),x_-(a)|a\in A\}$:\begin{align*}
	a \mapsto x_+(a) \mbox{ and } a \mapsto x_-(a) \mbox{ are } k \mbox{ linear maps,}\\
	[x_+(A),x_+(A)]=[x_-(A),x_-(A)]=0,\\
	[[x_+(a),x_-(b)],x_+(c)]=x_+(\tfrac{1}{2}(abc+cba)),\\
	[[x_-(b),x_+(a)],x_-(c)]=x_-(\tfrac{1}{2} (bac+cab)),
	\end{align*}for all $a,b,c \in A$.  This gives $\widehat{\sltwo(A)} \cong K(A^{(+)})$, as this presentation defines $K(A^{(+)})$.
	
	Now set $A=k\la x, y \ra$. According to \cite{ShestZelFPjordan}, $k\la x, y\ra ^{(+)}$ is not finitely presented as a Jordan algebra. Theorem \ref{Tkk Jor presentation} then implies that $\widehat{\mathfrak{sl}_2(k\la x, y \ra)}$, being isomorphic to $K(k\la x, y \ra^{(+)})$, is not finitely presented.\end{proof}

\section{Finite presentation of $M_2(A)^{(+)}$}

\begin{lemma}
	Let $k$ be a field where $char(k)\neq 2$, $A$ a unital associative $k$-algebra. Then $\widehat{\mathfrak{sl}_4(A)} \cong K(M_2(A)^{(+)})$.
\end{lemma}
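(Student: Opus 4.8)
The plan is to follow the template of the proof of Theorem~\ref{sl2 non fp theorem}: adapt a presentation of $\widehat{\mathfrak{sl}_4(A)}$ to the $2\times 2$ block decomposition of $4\times 4$ matrices and recognize it as the TKK presentation of the Jordan algebra $M_2(A)^{(+)}$. Partition $\{1,2,3,4\}$ into the blocks $\{1,2\}$ and $\{3,4\}$; this induces a $3$-grading $\widehat{\mathfrak{sl}_4(A)}=\mathfrak g_{-1}\oplus\mathfrak g_0\oplus\mathfrak g_{+1}$ in which $\mathfrak g_{+1}$ is the upper-right block and $\mathfrak g_{-1}$ the lower-left block, each a copy of $M_2(A)$, while $\mathfrak g_0$ is spanned by the two diagonal blocks. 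Starting from the Steinberg-type presentation of $\widehat{\mathfrak{sl}_4(A)}$ supplied by \cite{ZeZel} (generators $X_{ij}(a)$, $1\le i\ne j\le 4$, linear in $a$), I would package the off-diagonal blocks into generators
\[
X^+(m)=\sum_{p,q\in\{1,2\}}X_{p,q+2}(m_{pq}),
\qquad
X^-(m)=\tfrac12\sum_{p,q\in\{1,2\}}X_{p+2,q}(m_{pq}),
\]
for $m=(m_{pq})\in M_2(A)$, where the factor $\tfrac12$ is the analogue of the renormalization $x_-(a)=X_{21}(\tfrac12 a)$ used in the $\mathfrak{sl}_2$ case.

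The computational heart is the verification of \eqref{K1} and \eqref{K2}. Relation \eqref{K1}, $[X^\sigma(m),X^\sigma(m')]=0$, is the statement that $\mathfrak g_{+1}$ and $\mathfrak g_{-1}$ are abelian, which reduces to the commuting Steinberg relations, since any two matrix units occurring in a single block generator have disjoint row/column indices in the required sense. For \eqref{K2}, a direct bracket computation in block-matrix form (each entry a $2\times 2$ matrix over $A$) gives
\[
[[X^+(m),X^-(m')],X^+(m'')]
=\tfrac12\begin{pmatrix}0 & mm'm''+m''m'm\\ 0 & 0\end{pmatrix}
=X^+\bigl(\tfrac12(mm'm''+m''m'm)\bigr)
=X^+(\{m,m',m''\}),
\]
since the Jordan triple product of $M_2(A)^{(+)}$ is $\{m,m',m''\}=\tfrac12(mm'm''+m''m'm)$; the computation with $\sigma=-$ is symmetric. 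Thus the block generators satisfy exactly \eqref{K1}, \eqref{K2}.

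To conclude the isomorphism I would play the two presentations against each other through the universal property. The block realization $K(M_2(A)^{(+)})\to\mathfrak{sl}_4(A)$, $x^\pm(m)\mapsto$ the corresponding block, is a central extension: it is injective on each $\mathfrak g_{\pm 1}$, so its kernel lies in $\mathfrak g_0$, and \eqref{K2} forces any kernel element to bracket trivially with $\mathfrak g_{\pm 1}$, hence to be central. Moreover $K(M_2(A)^{(+)})$ is perfect, because $\tfrac12\in k$ and $M_2(A)$ is unital, so the identity $\{1,1,m\}=m$ writes every generator as a triple bracket. Since $\mathfrak{sl}_4(A)$ is perfect, its universal central extension provides a surjective map $\phi:\widehat{\mathfrak{sl}_4(A)}\to K(M_2(A)^{(+)})$ covering the identity on $\mathfrak{sl}_4(A)$, while the computation above, read as verification of the defining relations, provides a map $\psi:K(M_2(A)^{(+)})\to\widehat{\mathfrak{sl}_4(A)}$ in the opposite direction. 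Both cover $\mathrm{id}_{\mathfrak{sl}_4(A)}$, so $\psi\circ\phi=\mathrm{id}$ by the uniqueness clause of the universal property; as $\phi$ is also surjective, it is an isomorphism with inverse $\psi$.

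I expect the last identification step to be the main obstacle. The essential point is not the matrix bookkeeping but establishing that the block-adapted relations genuinely present the \emph{universal} central extension rather than $\mathfrak{sl}_4(A)$ or some intermediate cover; this is where the precise presentation from \cite{ZeZel} and the hypothesis $\mathrm{char}(k)\ne 2$ enter. Concretely one must check that the relations of $\widehat{\mathfrak{sl}_4(A)}$ supported in $\mathfrak g_0$ — in particular the Cartan brackets $[X^+(m),X^-(m')]$ and the off-diagonal relations within each diagonal block — are all consequences of \eqref{K1} and \eqref{K2}, and conversely; the verification that the kernel of the block realization is central, carried out via the $3$-grading, is the technical core.
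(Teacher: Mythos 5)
Your proposal is correct, and its first half coincides with the paper's proof: the same $2\times 2$ block partition of $\{1,2,3,4\}$, the same identification of the two off-diagonal blocks with copies of $M_2(A)$, and the same renormalization of the lower-left block by $\tfrac12$ (the analogue of $x_-(a)=X_{21}(\tfrac12 a)$), after which \eqref{K1} and \eqref{K2} are exactly the relations you verify. Where you genuinely diverge is the concluding step. The paper argues by direct rewriting of presentations: it asserts that, in the new generators, the Kassel--Loday/Steinberg presentation of $\widehat{\mathfrak{sl}_4(A)}$ \emph{becomes} the defining presentation of $K(M_2(A)^{(+)})$ from Definition~\ref{tkkdef} --- implicitly a Tietze-transformation argument in which the diagonal-block generators $X_{12},X_{21},X_{34},X_{43}$ are eliminated (e.g.\ $X_{12}(a)=[X_{13}(a),X_{32}(1)]$) and the surviving relations are matched against \eqref{K1}, \eqref{K2}. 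You instead conclude via the universal property: the block realization $K(M_2(A)^{(+)})\to\mathfrak{sl}_4(A)$ is a perfect central extension (kernel central via the $3$-grading and injectivity on $\mathfrak g_{\pm1}$; perfectness from $\{1,1,m\}=m$, using the unit of $M_2(A)$), which yields $\phi$, while your relation check yields $\psi$, and the uniqueness clause forces $\psi\circ\phi=\mathrm{id}$. Your route buys rigor exactly where the paper is terse --- you never need to show that the $\mathfrak g_0$-supported Steinberg relations are consequences of \eqref{K1}, \eqref{K2} --- at the cost of the extra structural lemmas; the paper's route is shorter but leaves that equivalence of relation sets implicit. One caveat: your displayed bracket computation is carried out in block matrices, i.e.\ in $\mathfrak{sl}_4(A)$ itself, whereas for $\psi$ to exist the identities must be derived inside $\widehat{\mathfrak{sl}_4(A)}$ from the Steinberg relations; this is routine precisely because $n=4\geq 3$ allows triple brackets to be reduced by Jacobi through an auxiliary index (e.g.\ $[[X_{13}(a),X_{31}(b)],X_{13}(c)]$ via $X_{13}(c)=[X_{14}(c),X_{43}(1)]$), but it deserves to be said, since elements such as $[X_{13}(a),X_{31}(b)]$ are not themselves reducible. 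Finally, the worry in your last paragraph --- whether the block-adapted relations present the \emph{universal} central extension rather than an intermediate cover --- is the burden of the paper's presentation-rewriting route, not of yours: in your argument the two-sided universal-property sandwich disposes of it automatically.
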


\begin{proof}
	According to \cite{Kassellodaycentral,GaoShang}, $\widehat{\mathfrak{sl}_4(A)}$ admits the presentation with generating set $\{X_{ij}(s) \mid s\in A, 1\leq i \neq j \leq n\}$ and set of  relations 
	\begin{align*} &\alpha\mapsto X_{ij}(\alpha) \text{ is a $k$-linear map,}\\
	&[X_{ij}(\alpha), X_{jk}(\beta)] = X_{ik}(\alpha\beta), \text{ for distinct } i, j, k, \\
	&[X_{ij}(\alpha), X_{kl}(\beta)] = 0, \text{ for } j\neq k, i\neq l,
	\end{align*}for all $\alpha, \beta \in A$.  According to \cite{ZeZel}, $\widehat{\mathfrak{sl}_4(A)}$ is finitely presented.
	
	Like in Theorem \ref{sl2 non fp theorem}, we reorganize the presentation: the $2 \times 2$ block partition of $\mathfrak{sl}_4$ allows us to identify $X_{13}(A) \oplus X_{14}(A) \oplus X_{23}(A) \oplus X_{24}(A)$ to a copy of $M_2(A)^{(+)}$ (`` $x_+$''), and $X_{31}(A) \oplus X_{32}(A) \oplus X_{41}(A) \oplus X_{42}(A)$ to another copy of $M_2(A)^{(+)}$ (`` $x_-$''). Now identify through the ($k$-linear) assignment 
	\[\begin{tabular}{c c c c}
	$X_{13}(a)  \rightarrow x_+(e_{11}(a)),$ & $X_{14}(a)  \rightarrow x_+(e_{12}(a)),$  & $X_{23}(a)  \rightarrow x_+(e_{12}(a)),$ & $X_{24}(a)  \rightarrow x_+(e_{22}(a))$
	\end{tabular}\]  and 
	\[\begin{tabular}{c c c c}
	$\tfrac{1}{2} X_{31}(a)  \rightarrow x_-(e_{11}(a)),$ & $\tfrac{1}{2} X_{32}(a)  \rightarrow x_-(e_{12}(a)),$  & $\tfrac{1}{2} X_{41}(a)  \rightarrow x_-(e_{12}(a)),$ & $ \tfrac{1}{2} X_{42}(a)  \rightarrow x_-(e_{22}(a))$,
	\end{tabular}\]where $\oplus e_{ij}(A)$ is the Peirce decomposition of $M_2(A)$, namely decomposing $2 \times 2$ matrices into subspaces corresponding to the four entries.
	
	Under these new expressions our defining presentation becomes \begin{align*}
	U \mapsto x_+(U) \mbox{ and } U \mapsto x_-(U) \mbox{ are } k \mbox{ linear maps,}\\
	[x_+(M_2(A)),x_+(M_2(A))]=[x_-(M_2(A)),x_-(M_2(A))]=0,\\
	[[x_+(U),x_-(V)],x_+(W)]=x_+(\tfrac{1}{2}(UVW+WVU)),\\
	[[x_-(V),x_+(U)],x_-(W)]=x_-(\tfrac{1}{2} (VUW+WUV)),
	\end{align*}for all $U,V,W\in M_2(A)$. Similar to Theorem \ref{sl2 non fp theorem}, this gives  $\widehat{\mathfrak{sl}_4(A)} \cong K(M_2(A)^{(+)})$. \end{proof}

\begin{theorem}
	Notations be as before. If $A$ is finitely presented as a $k$-algebra, then the special Jordan algebra $M_2(A)^{(+)}$ is finitely presented.
\end{theorem}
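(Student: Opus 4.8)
The plan is to assemble the three ingredients the paper has just put in place and read off the conclusion from the Lie-to-Jordan transfer principle. First I would record that, because $A$ is a unital associative $k$-algebra, the matrix algebra $M_2(A)$ is again unital, so $M_2(A)^{(+)}$ is a \emph{unital} Jordan algebra; this is exactly the hypothesis needed to feed into Proposition \ref{Tkk Jor presentation}. It is special by construction, being the plus-algebra $M_2(A)^{(+)}$ of the associative algebra $M_2(A)$ under $x\circ y=\tfrac12(xy+yx)$, so the adjective in the statement costs nothing.

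Next I would invoke the preceding lemma, which identifies the TKK Lie algebra of the Jordan triple system $M_2(A)^{(+)}$ with the universal central extension $\widehat{\mathfrak{sl}_4(A)}$ via the $2\times 2$ block (Peirce) matching of $X_{1\bullet},X_{2\bullet}$ with $x_+$ and $X_{3\bullet},X_{4\bullet}$ with $x_-$, together with the $\tfrac12$-rescaling on the lower block. Thus $K\big(M_2(A)^{(+)}\big)\cong\widehat{\mathfrak{sl}_4(A)}$ as Lie algebras.

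I would then apply the main theorem of \cite{ZeZel}: since $A$ is finitely presented and $4\geq 3$, the Lie algebra $\widehat{\mathfrak{sl}_4(A)}$ is finitely presented. Transporting finite presentability across the isomorphism above shows that $K\big(M_2(A)^{(+)}\big)$ is finitely presented as a Lie algebra. Finally, since $M_2(A)^{(+)}$ is a unital Jordan algebra whose TKK Lie algebra is finitely presented, Proposition \ref{Tkk Jor presentation} delivers that $M_2(A)^{(+)}$ is finitely presented as a Jordan algebra, which is the assertion.

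As for difficulty: in the present write-up there is essentially no obstacle, since the two nontrivial facts---the isomorphism $\widehat{\mathfrak{sl}_4(A)}\cong K(M_2(A)^{(+)})$ and the finite presentability of $\widehat{\mathfrak{sl}_4(A)}$---are already established in the preceding lemma and in \cite{ZeZel}. Were one proving this from scratch, the genuine work would all sit in verifying that isomorphism: carefully matching the $\mathfrak{sl}_4$ Steinberg relations to the two defining TKK relations (\ref{K1}),(\ref{K2}) for the triple product $\{U,V,W\}=\tfrac12(UVW+WVU)$ on $M_2(A)$, keeping the $\tfrac12$-factors and the sign conventions consistent, and using $\tfrac12\in k$ throughout. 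One should also confirm that Proposition \ref{Tkk Jor presentation} really requires only the existence of a unit and nothing further about $A$ beyond finite presentability, which it does through Lemmas \ref{lemma1} and \ref{lemma2}.
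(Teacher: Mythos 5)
Your proposal is correct and follows exactly the paper's route: the preceding lemma gives $\widehat{\mathfrak{sl}_4(A)} \cong K(M_2(A)^{(+)})$, the main result of \cite{ZeZel} gives finite presentability of $\widehat{\mathfrak{sl}_4(A)}$ for finitely presented $A$, and Proposition \ref{Tkk Jor presentation} (via Lemmas \ref{lemma1} and \ref{lemma2}, using that $M_2(A)^{(+)}$ is unital) transfers this to the Jordan algebra. Your added remarks on unitality and on where the real work sits are accurate glosses on the same argument, not a different proof.
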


\begin{proof}
 Apply Proposition \ref{Tkk Jor presentation} to $ K(M_2(A)^{(+)})$ (which is isomorphic to $\widehat{\mathfrak{sl}_4(A)}$).
\end{proof}

	\section*{Acknowledgement}
The author would like to thank Efim Zelmanov for very useful discussions: his detailed comments shaped this note into its current form.

\bibliographystyle{plain}  
\bibliography{FPShenzhenmergedBibII}   

 \end{document}